\def \bphi{\bm{\varphi}}
\def\({\left(}
\def\){\right)}
\def\e{{\mathbf{\,e}}}
\newtheorem{theorem}{Theorem}
\newtheorem*{theorem*}{Theorem}
\newtheorem{lemma}{Lemma}
\numberwithin{equation}{section}
\numberwithin{table}{section}
\numberwithin{figure}{section}
\theoremstyle{definition}
\theoremstyle{remark}
\numberwithin{equation}{section}
\newcommand{\mmod}[1]{\,\,(\text{mod}\,\,#1)}
\def\scrC{{\mathscr C}}
\def\F{\mathbb F}
\def\N{\mathbb N}
\def\Q{\mathbb Q}
\def\R{\mathbb R}
\def\T{\mathbb T}
\def\Z{\mathbb Z}
\def\alp{{\alpha}}  
\def\bet{{\beta}}
\def\gam{{\gamma}} \def\Gam{{\Gamma}} 
 \def\Del{{\Delta}}
\def\phi{{\varphi}}
\def\balp{{\boldsymbol \alpha}}
\def\bphi{{\boldsymbol \varphi}}
\def\d{{\partial}}
\def\le{\leqslant} \def\ge{\geqslant}
\def\d{{\,{\rm d}}}
\def\mmod#1{\;(\mathrm{mod}\;{#1})}
\DeclareMathOperator{\lc}{lc}
\begin{document}

\title[Weyl sums failing square-root cancellation]{Two-dimensional Weyl sums failing square-root cancellation along lines}

\author{Julia Brandes and Igor E. Shparlinski}
\address{JB: Mathematical Sciences, University of Gothenburg and Chal\-mers Institute of Technology, 412 96 G\"oteborg, Sweden}
\email{brjulia@chalmers.se}
\address{IES: Department of Pure Mathematics, University of New South Wales, Sydney NSW 2052, Australia}
\email{igor.shparlinski@unsw.edu.au}

\subjclass[2010]{ Primary 11L15; Secondary 11J83, 11T23}
\keywords{Exponential sums}
\date{}

\begin{abstract}
	We show that a certain two-dimensional family of Weyl sums of length $P$ takes values as large as $P^{3/4 + o(1)}$ on almost all linear slices of the unit torus, contradicting a widely held expectation that Weyl sums should exhibit square-root cancellation on generic subvarieties of the unit torus. This is an extension of a result of J.~Brandes, S.~T.~Parsell, C.~Poulias, G.~Shakan and R.~C.~Vaughan (2020) from quadratic and cubic monomials to general polynomials of arbitrary degree. The new ingredients of our approach are the classical  results  of E.~Bombieri (1966) on exponential sums along a curve and R.~J.~Duffin and A.~C.~Schaeffer (1941) on Diophantine approximations by rational numbers with prime denominators.
\end{abstract}

\maketitle

\section{Introduction}

Given their central role in many number theoretic applications, it is no surprise that Weyl sums and their properties have been subject to thorough investigation over the years. For a collection $\bphi$ of linearly independent polynomials $\phi_1, \ldots, \phi_r \in \Z[X]$ with respective degrees $k_1, \ldots, k_r$ we consider 
the {\it Weyl sums} 
$$
		f_{\bphi}(\balp)= \sum_{1 \le x \le P} \e(\alp_1 \phi_1(x) + \ldots + \alp_r \phi_r(x)),
$$
where $\e(z) = \exp\(2 \pi i z\)$ and $\balp=(\alp_1, \ldots, \alp_r)$. We also write $\T = \R / \Z$ for the unit torus, and refer to the end of this section for other notational conventions we use.

Whilst it is well known that $f_{\bphi}(\balp)$ can be of order $P$ when the entries of $\balp$ lie in the neighbourhood of fractions with a small denominator, the general expectation has always been that for a ``typical'' $\balp$ one should have the upper and lower bounds
\begin{equation} \label{sqrt-bd}
	P^{1/2} \ll f_{\bphi}(\balp) \ll P^{1/2 + o(1)}.
\end{equation} 
This question has recently been investigated in work by Chen and Shparlinski~\cite{CS1}, which in particular implies that the bounds~\eqref{sqrt-bd} hold for a subset of $\balp \in \T^r$ of full Lebesgue measure whenever the polynomials $\bphi$ have a non-vanishing Wronskian~\cite[Corollary~2.2]{CS1}. A particularly strong version of this result, applicable to the situation when $\phi_j(X)=X^j$ for $1 \le j \le r$, is available in subsequent work~\cite{CKMS}, where the interested reader will also find a more comprehensive bibliography on the subject.

In practical applications it is often necessary to control the size of $f_{\bphi}(\balp)$ on linear slices of $\T^r$, where some of the $\alp_i$ are fixed to lie in some set of full measure, whereas the remaining ones range over the entire unit interval. Such situations typically arise in ``minor arcs'' situations where some, but not all, entries of $\balp$ may have a good rational approximation and thus lie in an anticipated exceptional set. This problem has recently been studied in a very general setup by Chen and  Shparlinski~\cite{CS1} (see also~\cite{CS3}), refining an approach developed by Wooley~\cite{TDW16}. Their main result~\cite[Theorem~2.1]{CS1} asserts that whenever the polynomials $\bphi$ have a non-vanishing Wronskian, then for almost all $(\alp_1, \ldots, \alp_d) \in \T^d$ one has bounds of the shape 
$$
	\sup_{\alp_{d+1}, \ldots, \alp_r \in \T} |f_{\bphi}(\alp_1, \ldots, \alp_r)| 	\ll P^{1/2 + \Gamma(d, \bphi) + o(1)}, 
$$ 
where $\Gamma(d, \bphi)$ is a non-negative function depending on the degrees of the polynomials $\bphi$, for the precise definition of which we refer to~\cite{CS1}. 
Unfortunately, even though the bound of~\cite[Theorem~2.1]{CS1} gives strong results in a number of configurations and notably implies that one can take $\Gam(d, \bphi)=0$ for all admissible $r$-tuples of polynomials when $d=r$, in many other cases the bounds it furnishes do not beat even the trivial bound. In such situations, one has to resort to the more classical methods employing bounds of Weyl or Hua type and their subsequent generalisations (see~\cite[Lemma~2.4 and Theorem~5.2]{V:HL} for the former, and \cite[Lemma~2.5]{V:HL} as well as the results of \cite[Section~14]{TDW19} for the latter).
Bounds of this nature provide also the crucial input in the work by Erdo\u{g}an and Shakan~\cite{ES}, as well as in recent work by Chen and Shparlinski~\cite{CS2} in which, motivated by some links to certain questions on classical partial differential equations, they establish upper bounds along linear slices of the exponential sum associated with pairs of polynomials $\phi_1, \phi_2$ differing by a linear term. 
Several related results have recently been obtained by Barron~\cite{Barr}. However, as these bounds use Vinogradov's mean value theorem 
(see~\cite[Theorem~1.1]{BDG} or~\cite[Theorem~1.1]{TDW19}) as their main input, which is inefficient for Weyl sums whose degree exceeds their dimension, they are inherently unable to provide bounds stronger than $O(P^{1-c_k})$ for some positive parameter $c_k$ of size $c_k \asymp k^{-2}$.

Whilst exponents of this magnitude are not believed to be sharp in general, Brandes et al.~\cite{BPPSV} have recently shown that one cannot hope 
to have $\Gam(d, \bphi)=0$ for all choices of polynomials with non-vanishing Wronskian when $d <r$. 
In particular, for the choice $\phi_1(x) = X^k + X$ and $\phi_2(X)=X^k$ with $k=2$ or $k=3$, they show in~\cite[Theorem~1.3]{BPPSV} that for all $\alp_2 \in \R \setminus \Q$ and any $\tau> 0$ there exist arbitrarily large values of $P$ for which we have the lower bound
\begin{equation} \label{BPPSV-bd}
 	\sup_{\alp  \in \T} |f_{\bphi}(\alp_1, \alp_2)| \gg P^{3/4 -\tau},
\end{equation} 
and that for almost all $\alp_2 \in \T$ this bound can be matched by a corresponding upper bound 
$$
	\sup_{\alp  \in \T} |f_{\bphi}(\alp_1, \alp_2)|\ll P^{3/4 + o(1)}.
$$

To our knowledge, this is the first indication in the literature that the expectation that~\eqref{sqrt-bd} should hold for all $\balp$ on a linear slice of $\T^r$ may be too naive. In~\cite{BPPSV} the authors speculate that the same behaviour as in~\eqref{BPPSV-bd} might continue to hold for polynomials $\phi_1(X) = X^k + X$ and $\phi_2(X)=X^k$ with $k\ge 4$. 

The goal of this paper is therefore to extend the bound in~\eqref{BPPSV-bd} to more general polynomials, allowing also for higher degrees.

\begin{theorem}
\label{thm:main} 
 	Let $\phi \in \Z[X]$ be a polynomial of degree $k \ge 2$, and set 
 	\begin{equation} \label{expsum}
	 	f(\alp_1, \alp_2) = \sum_{1 \le x \le P} \e(\alp_1 (\phi(x)+x) + \alp_2 \phi(x)).
 	\end{equation} 
 	There exists a set $\scrC \subseteq \T$ of full Lebesgue measure such that for any $\tau>0$ and all $\alp_2 \in \scrC$ there exist arbitrarily large values of $P$ for which one has the bound 
	$$
 		\sup_{\alp_1 \in \T} |f(\alp_1, \alp_2)| \gg  P^{3/4-\tau}.
	$$
\end{theorem}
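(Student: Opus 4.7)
My plan is to exploit the arithmetic structure of $\phi(x)$ modulo a prime $q$. Rewriting
$$f(\alpha_1, \alpha_2) = \sum_{x=1}^P \e\bigl(\alpha_1 x + (\alpha_1 + \alpha_2) \phi(x)\bigr),$$
I would choose $\alpha_1 = a/q - \alpha_2$ for an integer $a$ and a prime $q$, making $\alpha_1 + \alpha_2 = a/q$ rational. The first ingredient is Duffin and Schaeffer's 1941 theorem for approximations with prime denominator: since $\sum_p 1/p$ diverges, there is a full-measure set $\scrC \subseteq \T$ such that for each $\alpha_2 \in \scrC$ there are infinitely many coprime pairs $(n, q)$ with $q$ prime and $|\alpha_2 - n/q| \le 1/q^2$. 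For such a pair I would set $P = \lfloor 1/|\alpha_2 - n/q|\rfloor$, so that $P \ge q^2 - 1$ and $P$ can be made arbitrarily large.

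Writing $\alpha_2 = n/q + \eta$ with $|\eta| \le 1/P$, and decomposing $x = qy + r$ with $0 \le r < q$, the identity $\phi(qy + r) \equiv \phi(r) \pmod q$ yields
$$f(\alpha_1, \alpha_2) = \sum_{r=0}^{q-1} \e\!\left(\frac{a\phi(r) + (a-n)r}{q}\right) \sum_{y :\, 1 \le qy+r \le P} \e(-\eta(qy + r)).$$
Since $|\eta|P \le 1$, the inner geometric sum has magnitude $(P/q)(1 + o(1))$ uniformly in $r$, so with $b = a - n$ the problem reduces to exhibiting a value of $b$ for which
$$S(b, q) := \sum_{r=0}^{q-1} \e\!\left(\frac{b(r + \phi(r)) + n \phi(r)}{q}\right)$$
satisfies $|S(b, q)| \gg q^{1/2}$.

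For this step I would analyse the second moment via Parseval:
$$\sum_{b \bmod q} |S(b, q)|^2 = q \sum_{\substack{r, r' \bmod q \\ r + \phi(r) \equiv r' + \phi(r') \,(q)}} \e\!\left(\frac{n(\phi(r) - \phi(r'))}{q}\right).$$
The diagonal $r = r'$ contributes exactly $q^2$. The off-diagonal pairs are parametrised by the plane curve $C_q \colon \widetilde{\phi}(X, Y) + 1 \equiv 0 \pmod q$, where $\widetilde{\phi}(X, Y) = (\phi(X) - \phi(Y))/(X - Y) \in \Z[X, Y]$ is symmetric of total degree $k-1$. Crucially, on $C_q$ one has $\phi(X) - \phi(Y) \equiv Y - X$, so the off-diagonal sum becomes $\sum_{(r, r') \in C_q(\F_q)} \e(n(r' - r)/q)$. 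By Bombieri's 1966 bound for exponential sums along an absolutely irreducible curve, this is $O_k(q^{1/2})$, so the total off-diagonal contribution is $O_k(q^{3/2})$. Consequently $\sum_b |S(b,q)|^2 \ge q^2/2$ for all sufficiently large $q$, and a standard averaging argument produces some $b^\star$ with $|S(b^\star, q)| \gg q^{1/2}$. Setting $a = b^\star + n$ and $\alpha_1 = a/q - \alpha_2$, we conclude
$$|f(\alpha_1, \alpha_2)| \gg (P/q) \cdot q^{1/2} = P/q^{1/2} \ge P^{3/4},$$
since $q \le P^{1/2} + O(1)$. The $\tau$-slack in the theorem absorbs the implicit constants and the lower-order errors from the geometric sum approximation.

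The main obstacle is verifying the hypotheses of Bombieri's theorem for $C_q$: one must show that $\widetilde{\phi}(X, Y) + 1$ is absolutely irreducible over $\F_q$ for all but finitely many primes $q$, and that $Y - X$ is non-constant on the resulting curve. This is a statement about the geometry of the divided-difference polynomial of $\phi$; for $\phi$ of degree $k \ge 2$ the corresponding curve is absolutely irreducible over $\overline{\Q}$, and reduction modulo $q$ preserves this property outside a finite set of exceptional primes. The degenerate case $k = 2$, in which $C_q$ is a line, can be dispatched by a direct computation showing that the off-diagonal sum vanishes outright, so the main estimate is in fact strengthened.
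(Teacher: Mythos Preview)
Your strategy coincides with the paper's: use Duffin--Schaeffer to approximate $\alpha_2$ by a fraction with prime denominator $q$, choose $\alpha_1$ so that $\alpha_1+\alpha_2$ is rational with denominator $q$, and then establish via a second-moment argument combined with Bombieri's bound for exponential sums along a curve that the relevant complete sum is $\gg q^{1/2}$ for some choice of numerator. Two points in your execution need repair.

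First, your choice $P=\lfloor 1/|\eta|\rfloor$ does not yield the claimed inner-sum estimate $(P/q)(1+o(1))$. With this $P$ one has $|\eta|P=1-O(|\eta|)$, so the full linear sum $\sum_{1\le x\le P}\e(-\eta x)$ has modulus $|\sin(\pi\eta P)/\sin(\pi\eta)|=O(1)$ rather than $\asymp P$, and the residue-class sums are correspondingly $O(1)$; the main term is lost entirely. The remedy is to take $P$ somewhat smaller: the paper sets $P^{1+\tau}=q^2$, so that $|\eta|P\le q^{-2}P=P^{-\tau}=o(1)$ and the approximation $I(\beta)=P(1+o(1))$ is genuine. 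This is precisely where the $\tau$-slack enters, not merely to absorb constants.

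Second, the obstacle you flag --- absolute irreducibility of $\widetilde\phi(X,Y)+1$ over $\F_q$ --- is not needed and is not verified in the paper either. The form of Bombieri's theorem invoked there (his Theorem~6) applies to a plane curve of degree $d$ without any irreducibility hypothesis, giving the bound $(d^2+2d-3)\sqrt{q}+d^2$, provided only that the function in the exponential is non-constant on every component. In your coordinates this means no component of $\widetilde\phi(X,Y)+1=0$ is a line $Y-X=c$; but substituting $Y=X+c$ yields $\phi(X+c)-\phi(X)-c=0$, a polynomial in $X$ of degree $k-1$ with leading coefficient $kc\,\lc(\phi)$, hence with at most $k-1$ roots whenever $q\nmid k\,\lc(\phi)$. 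Thus such a line meets the curve in finitely many points and is never a component. This short argument replaces the irreducibility check entirely.
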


Thus, whenever $\bphi = (\phi_1, \phi_2)$ is a pair of polynomials differing only by a linear term, the associated exponential sum is are substantially larger than originally anticipated on almost all linear slices of $\T$. The fact that in our result the polynomials under consideration differ only by a linear term seems to play a role, since linear exponential sums do not exhibit square root cancellation in the same manner as their cousins of higher degree do. It is therefore an interesting question to investigate whether the behaviour observed in Theorem~\ref{thm:main}  persists, perhaps in a weaker form, even when the polynomials occurring in the exponential sum differ by more than a linear term.

Unlike in~\cite{BPPSV}, our result in Theorem~\ref{thm:main} is not complemented by a corresponding upper bound. The methods presented in~\cite{BPPSV} could conceivably be adapted to provide such upper bounds even in the more general case considered in the manuscript at hand for all $\alp_2$ lying in a subset of full measure of a suitably defined set of ``major arcs''. This would be sufficient when $k \le 3$, as then the entire unit interval $\T$ can be covered by such major arcs. For higher degrees, these methods fail and we have no improvements over the existing results of~\cite{CS2}. Nonetheless, we believe that these difficulties are of a technical rather than fundamental nature, and consequently it seems likely that the exponent $3/4$ should be sharp in those cases also.

Our argument is a streamlined version of that presented in~\cite[Section~8]{BPPSV}, which deals with the case of $\phi(X)=X^k$ for $k=2, 3$. 
However, we augment this approach by two classical results. Firstly, we appeal to a bound of Bombieri~\cite[Theorem~6]{Bom} on exponential sums along a curve over a finite field, and secondly we make use of a result of Duffin and Schaeffer~\cite[Theorem~I]{DuSch} which allows us to restrict to the case where the diophantine approximations we consider have a prime denominator.

\textbf{Notation.} Throughout the paper, we make use of the following conventions. When $x \in \R$ we denote by $\|x \|$ the distance from $x$ to the nearest integer. Moreover, $P$ always denotes a large positive number, and the letter $p$ is reserved for primes. We use the Vinogradov `$\ll$', `$\gg$' and equivalent Bachmann--Landau notations `$O(\cdot)$' liberally, and here the implied constants are allowed to depend on $\bphi$ and $\tau$, but never on $P$ or $\balp$.

\section{Assembling the toolbox}

\subsection{Approximations by rational  exponential  sums} 
In  our examination of the exponential sum~\eqref{expsum} we  rely heavily on our understanding of the closely related sum 
$$
	g(\alp, \gam) = \sum_{1 \le x \le P} \e(\alp x + \gam \phi(x))
$$
and its associated approximations. Indeed, it is apparent from the respective definitions of these exponential sums that 
\begin{equation} \label{f=g}
	f(\alp_1, \alp_2) = g(\alp_1, \alp_1+\alp_2).
\end{equation} 
When $\phi(X)=X^k$, the latter one of these has been studied in~\cite{BR} and~\cite{BPPSV}, but it turns out that in the situation we are mainly interested in the pure power may be replaced by a more general polynomial. For $q \in \N$, $a, c \in \Z$ and $\bet \in \R$ set 
$$
	S(q; a,c)= \sum_{x=1}^q \e \left( \frac{a x + c \phi(x)}{q} \right) \qquad \text{ and } \qquad I(\bet) = \int_0^P \e(\bet x) \d x,
$$
and recall that for non-vanishing $\bet$ we can compute
\begin{equation} \label{I-bd}
	|I(\bet)| =P  \left| \frac{\sin(\pi \beta P)}{\pi \beta P} \right| \ll \min \{P, \| \bet \|^{-1} \},
\end{equation} 
while a classical Weil bound (see, for example,~\cite[Corollary~II.2F]{Schmidt}) shows that when $p$ is prime and $c \nmid p$ one has
\begin{equation} \label{S-bd}
	S(p; a,c) \le (k-1)p^{1/2}.  
\end{equation}
We then have the following straightforward modification of~\cite[Theorem~3]{BR} or~\cite[Theorem~4.1]{V:HL}.

\begin{lemma}\label{L1}
	Let $\phi \in \Z[X]$ be a polynomial of degree $k \ge 2$. Suppose that $\gam \in \Q$ with $\gam = c/p$ in lowest terms, where $p$ is a prime number, and fix $a \in \Z$ such that $|\alp - a/p| \le (2p)^{-1}$. Set then $\bet = \alp - a/p$. In this notation we have 
	$$
		g(\alp,\gam) = p^{-1} S(p; a,c) I(\bet) + O(p^{1/2} \log p).
	$$ 
\end{lemma}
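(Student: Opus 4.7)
The plan is to perform a discrete Fourier decomposition modulo $p$. Writing $\alp = a/p + \bet$ and $\gam = c/p$, the factor $\e((ax + c\phi(x))/p)$ appearing in $g(\alp,\gam)$ is periodic in $x$ modulo $p$, and hence admits the finite Fourier expansion
\[
\e\!\left(\frac{ax + c\phi(x)}{p}\right) = \frac{1}{p} \sum_{t=0}^{p-1} S(p; a-t, c)\, \e\!\left(\frac{tx}{p}\right),
\]
whose coefficients are precisely the complete sums $S(p; a-t, c)$. Substituting this identity into the definition of $g(\alp,\gam)$ and interchanging summations yields
\[
g(\alp, \gam) = \frac{1}{p} \sum_{t=0}^{p-1} S(p; a-t, c)\, U(t/p + \bet), \qquad U(\theta) := \sum_{x=1}^{P} \e(\theta x).
\]

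The term $t=0$ will produce the main contribution. Here I would use the closed-form geometric-series expression for $U(\bet)$ together with the analogous expression for $I(\bet)$ to establish $U(\bet) = I(\bet) + O(1)$ uniformly for $|\bet| \le 1/2$; this is a short elementary computation after clearing denominators. Paired with the Weil bound~\eqref{S-bd} applied to $S(p;a,c)$, this yields
\[
\frac{1}{p} S(p;a,c)\, U(\bet) = \frac{1}{p} S(p;a,c)\, I(\bet) + O(p^{-1/2}),
\]
so the induced error is comfortably absorbed into the claimed $O(p^{1/2}\log p)$.

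For the remaining terms $1 \le t \le p-1$, I would invoke the Weil bound~\eqref{S-bd} to obtain $|S(p; a-t, c)| \ll p^{1/2}$, and the usual geometric-series estimate $|U(\theta)| \ll \|\theta\|^{-1}$ for the linear sum. The combined contribution of these terms is then bounded by
\[
\ll p^{-1/2} \sum_{t=1}^{p-1} \left\| \tfrac{t}{p} + \bet \right\|^{-1}.
\]
The main obstacle is to control this last harmonic-type sum, and here the hypothesis $|\bet| \le 1/(2p)$ is essential: it guarantees that $\|t/p + \bet\| \gg \min(t, p-t)/p$ uniformly in $1 \le t \le p-1$, so that the sum is $\ll p\log p$ by a standard estimate. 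Multiplying by the prefactor $p^{-1/2}$ produces the desired error $O(p^{1/2}\log p)$, and the proof is complete. A minor technical caveat, easily dealt with, is that the Weil bound~\eqref{S-bd} requires the leading coefficient of $\phi$ to be nonzero modulo $p$; this fails for only finitely many primes, for which the claimed estimate is trivial.
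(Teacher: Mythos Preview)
Your proof is correct and essentially identical to the paper's: both decompose $g(\alp,\gam)$ as $p^{-1}\sum_{t} S(p;a-t,c)\,U(t/p+\bet)$ (the paper phrases this as ``sorting into residue classes'' and indexes by $b=-t$), isolate the $t=0$ term, replace the linear sum by $I(\bet)+O(1)$, and bound the remaining terms via the Weil estimate~\eqref{S-bd} together with $\|t/p+\bet\|\gg 1/p$ for $t\not\equiv 0\mmod p$. The only cosmetic difference is that the paper replaces $U$ by $I$ at every frequency before estimating the tail (controlling the accumulated $O(1)$ errors again by Weil), whereas you do so only on the main term; the two arrangements yield the same $O(p^{1/2}\log p)$.
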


\begin{proof}
	Just like in the proof of~\cite[Theorem~4.1]{V:HL}, we sort the variables into residue classes, which we then encode in terms of exponential sums. Thus
	$$
		g(\alp, \gam)=\frac{1}{p} \sum_{b=1}^p S(p; a+b, c) f(\bet-b/p,0).
 	$$
 	By~\cite[Lemma~4.2]{V:HL} we have $f(\bet-b/p,0) = I(\bet-b/p)+O(1)$, so that together with~\cite[Lemma~2.2]{BPPSV} we find that 
 	$$	 	
	 	g(\alp, \gam)=\frac{1}{p} \sum_{b=1}^p S(p; a+b, c) I(\bet-b/p) + O(p^{1/2}).
 	$$
 	Since $c \nmid p$, it follows upon deploying~\eqref{I-bd} and~\eqref{S-bd} that 
  	$$	 	
 		g(\alp, \gam) - p^{-1} S(p; a,c) I(\bet) \ll p^{-1/2 } \sum_{b=1}^{p-1} \|\bet - b/p \|^{-1} \ll p^{1/2} \log p,
 	$$ 
 	where in the last step we use that 
	$$
		\| \bet-b/p \|  \ge (2p)^{-1}
	$$ 
	for all $b \not \equiv 0 \mmod p$. This completes the proof. 
\end{proof}

\subsection{A lower bound on rational exponential sums} 

Our second main tool shows that the complete exponential sum $S(p; a,c)$ cannot be smaller than $p^{1/2}$ too often. It is useful to denote the leading coefficient of $\phi$ by $\lc(\phi)$. 

\begin{lemma}\label{L2}
	Let $p$ be a prime satisfying $p>(2k)^4$ with $p \nmid\lc(\phi)$, and let $c \in \Z$ with $p \nmid c$. Then there exists $a \in \Z$ with $p \nmid (a+c)$ such that 
	$$
		S(p; a, a+c) \ge \tfrac13 p^{1/2}.
	$$
\end{lemma}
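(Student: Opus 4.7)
The plan is to establish the result via a second moment argument applied to the sum $\sum_{a=0}^{p-1} |S(p;a,a+c)|^2$: I would show this total is essentially $p^2$, and then deduce via pigeonhole that some individual summand attains the desired lower bound.

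Expanding the square and applying orthogonality in the variable $a$, the sum reduces to
$$
	\sum_{a=0}^{p-1} |S(p;a,a+c)|^2 = p \sum_{\substack{1 \le x, y \le p \\ \psi(x) \equiv \psi(y) \!\!\pmod p}} e\!\left(\frac{c(\phi(x) - \phi(y))}{p}\right),
$$
where $\psi(X) = X + \phi(X)$. The key algebraic observation is that on the variety $\psi(x) \equiv \psi(y)$ one has $\phi(x) - \phi(y) \equiv y - x \pmod p$, so the weight becomes $e(c(y-x)/p)$. The diagonal $x = y$ contributes $p \cdot p = p^2$, while the off-diagonal contribution is an exponential sum over the plane curve $Q(x, y) \equiv 0 \pmod p$, where $Q(X, Y) = (\psi(X) - \psi(Y))/(X - Y) \in \Z[X,Y]$ has total degree $k - 1$.

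Bounding this off-diagonal sum is the heart of the argument, and is where I would invoke Bombieri~\cite[Theorem~6]{Bom}. Decomposing $Q$ over $\overline{\F_p}$ into absolutely irreducible factors and applying Bombieri's bound componentwise would yield an estimate of the shape $O_k(p^{1/2})$, provided that the linear function $c(Y - X)$ is non-constant on each component. The only way this could fail would be for some component to be a line $Y = X + b$ with $b \not\equiv 0 \pmod p$, forcing the identity $\psi(X + b) \equiv \psi(X) \pmod p$; comparison of the coefficients of $X^{k-1}$ combined with $p \nmid k\,\lc(\phi)$ would then yield $b \equiv 0 \pmod p$, a contradiction. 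Altogether one arrives at
$$
	\sum_{a=0}^{p-1} |S(p;a,a+c)|^2 = p^2 + O_k(p^{3/2}).
$$

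Finally, since $S(p; -c, 0) = \sum_{x=1}^p e(-cx/p) = 0$ by the hypothesis $p \nmid c$, the excluded value $a \equiv -c \pmod p$ contributes nothing to the sum of squares, and the $p-1$ admissible values of $a$ satisfy the same asymptotic. By pigeonhole, some admissible $a$ must satisfy $|S(p; a, a+c)|^2 \ge p - O_k(p^{1/2})$, and the hypothesis $p > (2k)^4$ is arranged precisely so that this error term is dominated by $8p/9$, yielding $|S(p; a, a+c)| \ge p^{1/2}/3$. The main obstacle lies in the application of Bombieri's theorem: one must track the $k$-dependence in the implicit constant well enough to confirm that the threshold $(2k)^4$ suffices, and to manage the absolutely irreducible decomposition of $Q$ without losing more than a factor polynomial in $k$.
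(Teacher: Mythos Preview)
Your proposal is correct and follows essentially the same route as the paper: a second-moment expansion via orthogonality, identification of the diagonal contribution $p^2$, and control of the off-diagonal via Bombieri's bound~\cite[Theorem~6]{Bom} on the curve $(\psi(X)-\psi(Y))/(X-Y)\equiv 0$. The paper makes the cosmetic substitution $n=m-h$ (so your $Q(X,Y)$ becomes their $\Delta(m,h)$), verifies non-constancy of $h$ on the curve by observing that the leading monomial in $m$ is $k\,\lc(\phi)m^{k-1}$ (equivalent to your exclusion of lines $Y=X+b$), and records the explicit Bombieri constant $((k-1)^2+2(k-1)-3)\sqrt{p}+(k-1)^2$, which is precisely the tracking of $k$-dependence you flag as the remaining task and which is indeed $<\tfrac{2}{3}p$ once $p>(2k)^4$.
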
 

\begin{proof}
	When $k=2$, the desired result follows from classical bounds on Gauss sums, so it is sufficient to consider the case when $k \ge 3$. By averaging and shifting the variable of summation, the result  follows if we can show that 
	\begin{equation}
\label{eq:2nd Mom}
		\sum_{a=1}^{p-1} |S(p; a-c,a)|^2 \ge  \tfrac{1}{3}  p^2
	\end{equation}
	for all primes $p > (2k)^4$ not dividing $\lc(\phi)$.

		We begin by noting that 
	\begin{align*}
		\sum_{a=1}^{p-1} |S(p; a-c,a)|^2 & = p \sum_{\substack{m,n = 1 \\ \phi(m)+m \equiv \phi(n)+n \mmod p}}^p \e \left(\frac{c(m-n)}{p}\right) - \left| \sum_{m=1}^p \e \left(\frac{cm}{p}\right) \right|^2.
	\end{align*}
	The second sum vanishes, and in the first one we make the change of variables $n=m-h$ and isolate the term corresponding to $h=0$. Hence
	\begin{equation} \label{pre-Bom}
		\sum_{a=1}^{p-1}  |S(p; a-c,a)|^2  = p^2 + p \sum_{m=1}^p \sum_{\substack{h=1 \\ \Del(m,h) \equiv 0 \mmod p}}^{p-1} \e(ch), 
	\end{equation}
	where we put 
	$$
		\Del(m,h)=(\phi(m+h)-\phi(m) + h)/h.
	$$ 
	Upon re-inserting in the term corresponding to $h=0$ and noting that all exponential sums in question take real values we discern that 
	\begin{align*}
	\sum_{m=1}^p  \sum_{\substack{h=1 \\ \Del(m,h) \equiv 0 \mmod p}}^{p-1} \e(ch) 
	&= \sum_{\substack{m,h=1 \\ \Del(m,h) \equiv 0 \mmod p}}^{p} \e(ch) - \sum_{\substack{m=1 \\ \Del(m,0) \equiv 0 \mmod p}}^p  1 \\
	&\le  \sum_{\substack{m,h=1 \\ \Del(m,h) \equiv 0 \mmod p}}^{p} \e(ch).
	\end{align*}
	If $k \ge 2$, then $\Del(X,Y)$ is a nontrivial polynomial in two variables of degree exactly $k-1$, so the congruence 
	$$
		\Del(m,h) \equiv 0 \mmod p
	$$
	defines a curve over the finite field $\F_p$.   Furthermore,  if $k> 1$, then $\Del(X,Y)$ is a nontrivial polynomial of degree exactly $k-1$ with respect to $X$ with the leading monomial $ k \lc(\phi)X^{k-1}$. Thus for $p> k$ and $p \nmid\lc(\phi)$  the variable  $h$ is not constant along this curve. 
	We may therefore apply~\cite[Theorem~6]{Bom} and find that 
	\begin{equation}\label{Bom-bd}
		\sum_{\substack{m,h=1 \\ \Del(m,h) \equiv 0 \mmod p}}^{p} \e(ch) 
						 \le \((k-1)^2 +2(k-1) -3\) \sqrt p + (k-1)^2 .
	\end{equation}  
	Under our assumption $p>(2k)^4$, for the right hand side in~\eqref{Bom-bd} we have 
	$$
 		\((k-1)^2 +2(k-1) -3\) \sqrt p + (k-1)^2 < \tfrac{2}{3}p.
	$$ 
	In view of~\eqref{pre-Bom}, we derive~\eqref{eq:2nd Mom}, which  is sufficient to establish the result. 
\end{proof}

\section{Proof of the main result}

The following result, going back to Duffin and Schaeffer~\cite{DuSch}, is a key ingredient in our arguments as it allows us to focus on those $\alp \in \T$ whose rational approximations have prime denominators. 

 \begin{lemma}\label{lem: approx a/[p}
	There is a  set $ \scrC \subseteq \T$ of full   Lebesgue measure such that for any $\alpha\in  \scrC$ there are infinitely many approximations 
	$$
		\left| \alpha - \frac{a}{p}\right| <  \frac{1}{p^2}
	$$
	with $a \in \Z$ and $p$ being a prime number. 
\end{lemma}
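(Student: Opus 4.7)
The plan is to deduce this as a direct application of the classical result of Duffin and Schaeffer cited as~\cite[Theorem~I]{DuSch}, specialised to the case of prime denominators. To that end, I would define the approximation function $\psi \colon \N \to [0,\infty)$ by setting $\psi(q) = 1/q$ when $q$ is a prime and $\psi(q) = 0$ otherwise. The Duffin--Schaeffer theorem then ensures that, provided the series
$$
	\sum_{q=1}^{\infty} \frac{\varphi(q) \psi(q)}{q}
$$
diverges, the set of $\alpha \in \T$ for which there exist infinitely many coprime pairs $(a,q)$ satisfying
$$
	\left| \alpha - \frac{a}{q} \right| < \frac{\psi(q)}{q}
$$
has full Lebesgue measure.

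The second step is to verify the divergence condition. With our choice of $\psi$, the series reduces to
$$
	\sum_{q=1}^{\infty} \frac{\varphi(q) \psi(q)}{q} = \sum_{p \text{ prime}} \frac{p-1}{p^2},
$$
and since $(p-1)/p^2 \ge 1/(2p)$ for every prime $p$, this diverges by Mertens' theorem on $\sum_p 1/p$. Hence the hypothesis of~\cite[Theorem~I]{DuSch} is fulfilled, and we obtain a full-measure set $\scrC \subseteq \T$ such that every $\alpha \in \scrC$ admits infinitely many coprime pairs $(a,p)$ with $p$ prime and $|\alpha - a/p| < 1/p^2$.

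Since distinct primes $p$ yield distinct reduced fractions $a/p$, these infinitely many coprime approximations in particular deliver infinitely many approximations in the sense of the lemma, which completes the argument. There is no significant obstacle to overcome: all the substantive content is furnished by the theorem of Duffin and Schaeffer, and the only choice one has to make is that of the support of $\psi$, which is tailored precisely so that the restriction to prime denominators preserves the divergence of the relevant series.
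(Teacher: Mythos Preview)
Your approach is exactly that of the paper, which simply cites \cite[Theorem~I]{DuSch} together with the authors' own remark on p.~245 discussing the prime case. There is, however, one small gap in your write-up: the 1941 theorem of Duffin and Schaeffer is \emph{not} the bare divergence criterion you describe --- that is the content of the Duffin--Schaeffer \emph{conjecture}, now the theorem of Koukoulopoulos and Maynard~\cite{KM}. The original Theorem~I requires, in addition to the divergence of $\sum_q \varphi(q)\psi(q)/q$, a regularity hypothesis of the shape
$$
	\sum_{q \le N} \frac{\varphi(q)\psi(q)}{q} \ge c \sum_{q \le N} \psi(q)
$$
for some fixed $c>0$ and infinitely many $N$. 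With your choice of $\psi$ supported on primes this is trivially satisfied, since $\varphi(p)/p = (p-1)/p \ge 1/2$ for every prime $p$; you should simply say so. Once that sentence is added, the argument is complete and matches the paper's.
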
  
\begin{proof}
	This is a direct application of~\cite[Theorem~I]{DuSch}, see also the remark on top of p.~245 of that paper. 
\end{proof}

We also remark that Lemma~\ref{lem: approx a/[p} is a   special case of the Duffin-Schaeffer conjecture, recently established as a theorem by Koukoulopoulos and Maynard~\cite{KM}. 

We now have the wherewithal to embark on the proof of Theorem~\ref{thm:main}.
Fix $\tau>0$, and let $\alp_2 \in \scrC$, where $\scrC$ is as in Lemma~\ref{lem: approx a/[p}. Then we can find an arbitrarily large prime number $p$, and $a_2 \in \Z$ not divisible by $p$, that satisfy $|\alp_2 - a_2/p| \le p^{-2}$. For any fixed such $p$ satisfying $p>(2k)^4$
 and not dividing $\lc(\phi)$,  define $P$ via the relation
\begin{equation} \label{def-P}
	P^{1+\tau} = p^2.
\end{equation} 
Lemma~\ref{L2} now guarantees the existence of an integer $a_1$ with $a_1+a_2 \not\equiv 0 \mmod p$ and having the property that 
\begin{equation} \label{large-S}
	S(p; a_1, a_1+a_2) \gg p^{1/2}.
\end{equation} 
Take now $\bet_2 = \alp_2-a_2/p$ and $\bet_1 = -\bet_2$, and put $\alp_1 = a_1/p + \bet_1$. Then upon recalling that $\gam=\alp_1+\alp_2$ in~\eqref{f=g}, we see that $\gam=c/p$ with $c = a_1 + a_2 \not\equiv 0 \mmod p$, whereupon Lemma~\ref{L1} yields the relation
$$
	g(\alp_1, \gam) = p^{-1} S(p; a_1, a_1+a_2) I(\bet_1) + O(p^{1/2} \log p).
$$
Recall now our definition of $P$ from~\eqref{def-P}. Since $|\bet_1| = |\bet_2| \le p^{-2} = P^{-1-\tau}$, it follows further from~\eqref{I-bd} that $|I(\bet_1)| = P \, (1+O(P^{-2\tau}))$,
so upon inserting~\eqref{large-S} we discern that 
$$	
	g(\alp_1, \gam) \gg P p^{-1/2} \gg P^{3/4-\tau}.
$$
In the light of~\eqref{f=g} and  Lemma~\ref{lem: approx a/[p}, this establishes the desired result.

\section*{Acknowledgements} 

The authors would like to thank James Maynard for drawing our attention to a result of  Duffin and  Schaeffer~\cite[Theorem~I]{DuSch} on Diophantine approximations with prime denominators.

During the preparation of this manuscript, JB was supported by Starting Grant no.~2017-05110 of the Swedish Science Foundation (Ve\-tenskapsr{\aa}det) and IS was supported by the Australian Research Council Grant DP170100786.

\end{document}